\newtheorem{prop}{Proposition}[section]
\newtheorem{thm}[prop]{Theorem}
\newtheorem{lmm}[prop]{Lemma}
\theoremstyle{remark} \theoremstyle{definition}
\newtheorem{remark}[prop]{Remark}
\newtheorem{definition}[prop]{Definition}
\newcommand{\R}{\mathbb{R}}
\newcommand{\Z}{\mathbb{Z}}
\newcommand{\N}{\mathbb{N}}
\newcommand{\K}{\mathcal{K}}
\newcommand{\D}{\mathcal{D}}
\numberwithin{equation}{section}
\title[Intersecting the Twin dragon with rational lines]{Intersecting the Twin dragon with rational lines}
\author[S. Akiyama, P. Gro{\ss}kopf, B. Loridant and W. Steiner]{Shigeki Akiyama, Paul Gro{\ss}kopf, Beno\^it Loridant and Wolfgang Steiner}
\address{Tsukuba University, Institute of Mathematics\\Tennodai-1-1-1\\ Tsukuba 350-8571, Japan}
\email{akiyama@math.tsukuba.ac.jp }
\address{Universit\'e libre de Bruxelles\\Boulevard du Triomphe\\ 1050 Bruxelles, Belgium}
\email{paul.grosskopf@gmx.at}
\address{Leoben University\\Franz Josefstrasse 18\\8700 Leoben, Austria}
\email{benoit.loridant@unileoben.ac.at}
\address{Universit\'e de Paris, CNRS, IRIF, F--75006 Paris, France}
\email{steiner@irif.fr}
\date{\today}
\dedicatory{Dedicated to Professor J\"org Thuswaldner on the occasion of his $50^{th}$ birthday}
\keywords{Number system, Hausdorff dimension} \subjclass[2000]{}
\begin{document}
\begin{abstract}
The Knuth Twin Dragon is a compact subset of the plane with fractal boundary of Hausdorff dimension $s = (\log \lambda)/(\log \sqrt{2})$, $\lambda^3 = \lambda^2 + 2$. 
Although the intersection with a generic line has Hausdorff dimension~$s-1$, we prove that this does not occur for lines with rational parameters. We further describe the intersection of the Twin Dragon with the two diagonals as well as with various axis parallel lines. 
\end{abstract}
\maketitle

\begin{section}{Introduction}
We investigate the intersections of the Knuth Twin Dragon with rational lines. 
Let $\alpha=-1+i$, then 
\[
\K=\left\{\sum_{k=1}^\infty \frac{d_k}{\alpha^k} \,:\, d_k \in \{0,1\}\right\}
\]
is the Knuth Twin Dragon. The Hausdorff dimension of its boundary $\partial K$ is $\mathfrak{s} = \frac{\log\lambda}{\log\sqrt{2}} \approx 1.5236$, where $\lambda$ is the real number satisfying $\lambda^3 = \lambda^2+2$. 
For lines 
\begin{equation} \label{e:Delta}
\Delta_{p,q,r} = \{x+iy \in \mathbb{C} \,:\, px+qy=r\}
\end{equation}
with $p,q,r \in \Z$, we show that the $\alpha$-expansions of $\K \cap \Delta_{p,q,r}$ are recognized by a finite automaton.

By a result of John Marstrand~\cite{M54}, the intersection of $\partial \K$ with Lebesgue almost all lines going through $\K$ has Hausdorff dimension $\mathfrak{s}-1$, meaning that in the set of all parameter triples $(p,q,r)\in \R^3$ for which $\Delta_{p,q,r}\cap \K \neq \emptyset$, the exceptional cases form a Lebesgue null set.  We obtain here that the Hausdorff dimension of the intersection of the boundary of the Twin Dragon with rational lines is never equal to $\mathfrak{s}-1$.

Further we revisit results by Shigeki Akiyama and Klaus Scheicher~\cite{AS05} and add uncountably many examples of horizontal, vertical, and diagonal lines. 

We mention that similar results were obtained in~\cite{MS13} for lines intersecting the Sierpinski carpet $F$. The set $F$ has Hausdorff dimension $\frac{\log8}{\log3}$. Manning and Simon showed that,  given a slope $\alpha\in\mathbb{Q}$, the intersection of $F$ with the line $y=\alpha x+\beta$ is strictly less than $\frac{\log8}{\log3}-1$ for Lebesgue almost every $\beta$. 

\begin{section}{Main statement and proof}
We first recall the notions of a canonical number system and its fundamental domain. 
Let $\beta$ be an algebraic integer and $\mathcal{N}=\{0,1,\dots,|N(\beta)|-1\}$, where $N(x)$ denotes the norm of $x$ over $\mathbb{Q}(\beta)/\mathbb{Q}$.  The pair $(\beta, \mathcal{N})$ is called a \emph{canonical number system} (CNS) if each $\gamma \in \Z[\beta]$ admits a representation of the form
\begin{equation} \label{e:gamma}
\gamma = \sum_{k=0}^n d_k \beta ^k, \quad d_k \in \mathcal{N}. 
\end{equation}
We call $\beta$ the  \emph{radix} or \emph{base} and $\mathcal{N}$ the set of \emph{digits}.
The representation \eqref{e:gamma} is unique up to leading zeros. 

The Knuth Twin Dragon $\K$ appears as the \emph{fundamental domain} of the CNS $(\alpha,\mathcal{N})$, where $\alpha=-1+ i$ is the root of the polynomial $x^2-2x-2$ and $\mathcal{N}=\{0,1\}$. The fundamental domain of a CNS is the set of all numbers that can be expressed with purely negative exponents. 
Since $\alpha^4=-4$, it is often useful to consider groups of four digits:
\[
\sum_{k=1}^\infty \frac{d_k}{\alpha^k} = \sum_{k=1}^\infty \frac{\sum_{j=0}^3 d_{4k-j} \alpha^j}{\alpha^{4k}} = \sum_{k=1}^\infty \frac{b_k}{(-4)^k},
\]
with the possibilities for $b_k = \sum_{j=0}^3 d_{4k-j} \alpha^j$ being 
\[
\begin{array}{llll}
[0000]_{\alpha} = 0, & [0001]_{\alpha} = 1, & [0010]_{\alpha}= -1{+}i, & [0011]_{\alpha} = i, \\ {}
[0100]_{\alpha} = -2i, & [0101]_{\alpha} = 1{-}2i, & [0110]_{\alpha} = -1{-}i, & [0111]_{\alpha} = -i, \\ {}
[1000]_{\alpha} = 2{+}2i, & [1001]_{\alpha} = 3{+}2i, & [1010]_{\alpha} = 1{+}3i, & [1011]_{\alpha} = 2{+}3i, \\ {}
[1100]_{\alpha} = 2, & [1101]_{\alpha} = 3, & [1110]_{\alpha} = 1{+}i, & [1111]_{\alpha} = 2{+}i.
\end{array}
\]
In other words, we have
\[
\K = \left\{\sum_{k=1}^\infty \frac{b_k}{(-4)^k} \,:\, b_k \in \mathcal{D}\right\},
\]
with
\[
\mathcal{D} = \{-1{-}i, -1{+}i, -2i, -i, 0, i, 1{-}2i, 1, 1{+}i, 1{+}3i, 2, 2{+}i, 2{+}2i, 2{+}3i, 3, 3{+}2i\} 
\]
Points in the intersection of $\K$ with lines $\Delta_{p,q,r} = \{x+iy \,:\, px+qy=r\}$ can now be characterized by their digit expansion in the following way.

\begin{lmm} \label{l:1}
We have $z \in \K \cap \Delta_{p,q,r}$ if and only if there is a digit sequence $b_1b_2\cdots \in \D^{\N}$ with 
\[
z= \sum_{k=1}^\infty  \frac{b_k}{(-4)^k} \quad \mbox{and} \quad  r = \sum_{k=1}^\infty \frac{p\, \mathfrak{R}(b_k)+q\, \mathfrak{I}(b_k)}{(-4)^k}.
\]
\end{lmm}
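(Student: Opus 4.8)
The plan is to observe that the statement is a direct reformulation of the two defining conditions ``$z\in\K$'' and ``$z\in\Delta_{p,q,r}$'' once $z$ is written in its base-$(-4)$ digit expansion, and that both directions of the equivalence follow from a single computation. First I would recall that, by the description of $\K$ established just above, a point $z$ lies in $\K$ if and only if there exists a digit sequence $b_1b_2\cdots\in\D^{\N}$ with $z=\sum_{k=1}^\infty b_k/(-4)^k$; this already accounts for the ``$z\in\K$'' half of both the hypothesis and the conclusion, and reduces the problem to translating the membership $z\in\Delta_{p,q,r}$ into a condition on the digits.

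The key step is the separation of real and imaginary parts. Since each $(-4)^k$ is a real number and the digits $b_k$ range over the finite set $\D$, the series $\sum_{k=1}^\infty b_k/(-4)^k$ converges absolutely (it is dominated by $C\sum_{k=1}^\infty 4^{-k}$, where $C=\max_{d\in\D}|d|$). Absolute convergence lets me apply the $\R$-linear functionals $\mathfrak{R}$ and $\mathfrak{I}$ term by term, so that writing $z=x+iy$ I obtain
\[
x=\sum_{k=1}^\infty \frac{\mathfrak{R}(b_k)}{(-4)^k}, \qquad y=\sum_{k=1}^\infty \frac{\mathfrak{I}(b_k)}{(-4)^k}.
\]
Both of these series converge absolutely as well, so an $\R$-linear combination of them may be regrouped into a single series.

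With this in hand I would substitute into the equation defining the line. By definition $z\in\Delta_{p,q,r}$ means $px+qy=r$, and multiplying the two displayed series by $p$ and $q$ respectively and adding gives
\[
px+qy=\sum_{k=1}^\infty \frac{p\,\mathfrak{R}(b_k)+q\,\mathfrak{I}(b_k)}{(-4)^k}.
\]
Hence $z\in\Delta_{p,q,r}$ is equivalent to $r=\sum_{k=1}^\infty \bigl(p\,\mathfrak{R}(b_k)+q\,\mathfrak{I}(b_k)\bigr)/(-4)^k$. Combining this with the characterization of $z\in\K$ from the first step yields both implications of the lemma at once: for the forward direction one takes the digit sequence supplied by $z\in\K$ and reads off the second sum; for the converse, a sequence $b_1b_2\cdots$ realizing the two prescribed sums gives both $z\in\K$ and $px+qy=r$.

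I do not expect a genuine obstacle here. The only point requiring care is the justification of the term-by-term splitting into real and imaginary parts and the regrouping of the two series into one, which is exactly where the absolute convergence guaranteed by the finiteness of $\D$ is used. Everything else is a direct translation between the geometric condition $px+qy=r$ and an identity between convergent real series.
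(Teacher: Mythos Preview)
Your argument is correct. The paper does not actually supply a proof of this lemma; it is stated and then used immediately, the authors evidently regarding it as a direct unpacking of the definitions of $\K$ and $\Delta_{p,q,r}$ once the base-$(-4)$ expansion of $\K$ has been written down. Your write-up makes explicit exactly the step they leave implicit---taking real and imaginary parts term by term and recombining---and your care about absolute convergence is the right justification, even if it is more detail than the authors felt necessary.
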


Here, $\mathfrak{R}(b)$ denotes the real part and $\mathfrak{I}(b)$ denotes the imaginary part of $b \in \mathbb{C}$. 

We will show that we can characterize the digit expansion of the points in the intersection $\Delta_{p,q,r} \cap \K$ via a B\"uchi automaton, that is a finite automaton that accepts infinite paths. Using this representation we will be able to calculate the Hausdorff dimension of the intersection $\K \cap \Delta_{p,q,r}$ as well as the Hausdorff dimension of $\partial K \cap \Delta_{p,q,r}$. 

\begin{definition}
A \emph{B\"uchi automaton} is a $5$-tuple $(Q,A,E,I,T)$, where $Q=\{q_1,\dots,q_N\}$ is a finite set of \emph{states}, $A$ is a finite \emph{alphabet}, $E\subset Q\times A \times Q$ is a set of \emph{edges} and $I,T\subset Q$ the set of \emph{initial} and \emph{terminal states}. Let $A^*$ denote the set of all (finite) words and $A^\omega$ denote the set of all (right) infinite words. A~word $w\in A^*$, $w=w_1\cdots w_n$, is \emph{accepted} by the automaton if and only if there are states $q_{i_0},\dots,q_{i_n}$ such that $q_{i_0}\in I$, $q_{i_n} \in T$ and $(q_{i_{k-1}},w_k,q_{i_k}) \in E$ for all~$k$. We call such a finite path \emph{successful}, and we call an infinite path successful if and only if infinitely many subpaths are successful. An infinite word $w\in A^\omega$ is accepted by the automaton if there exists an infinite successful path with \emph{label}~$w$. The set of all $w\in A^\omega$ that are accepted by the automaton is called its $\omega$-\emph{language}.
\end{definition}

B\"uchi automata are really helpful to describe self-similar sets. The automaton in Figure~\ref{fig:G} characterizes all infinite sequences of digits $0,1$ in base~$\alpha$ that give rise to boundary points in $\partial \K$; see~\cite{GKP98,ST03}. 

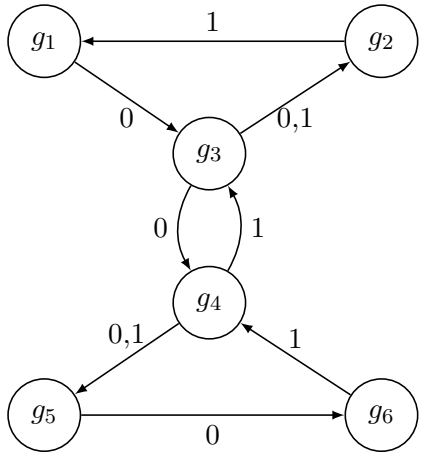
\begin{figure}[ht] 
\centerline{\begin{tikzpicture}[->,>= latex,node distance =2.8 cm,semithick]
\node [ state ] (1) {$g_1$};
\node [ state ] (2) [ right  =3.5 cm of 1] {$g_2$};
\node [ state ] (3) [ below right =0.8 cm and 1.5 cm of 1] {$g_3$};
\node [ state ] (4) [ below =1 cm of 3]  {$g_4$};
\node [ state ] (5) [ below left =0.8 cm and 1.5 cm of 4] {$g_5$};
\node [ state ] (6) [ right =3.5 cm of 5] {$g_6$};
\path (2) edge [ above ] node {1} (1) 
	  (1) edge [ below ] node {0} (3)
	  (3) edge [ below ] node {0,1} (2) 
	  (3) edge [ bend right, left ] node {0} (4)
	  (4) edge [ bend right, right ] node {1} (3) 
	  (4) edge [ above] node {0,1} (5)
	  (5) edge [ below ] node {0} (6) 
	  (6) edge [ above ] node {1} (4);
\end{tikzpicture}}
\caption{An automaton characterizing $\partial \K$ (in base $\alpha$).}\label{fig:G}
\end{figure}

Let $L_1,L_2$ two $\omega$-languages in the same alphabet accepted by $\mathcal{A}$ respectively~$\mathcal{B}$. It can be necessary to create automata accepting the union of the languages or their intersection. The union is not difficult: one just uses the union of states and edges, as well as the union of terminal and initial states. The intersection generally requires heavy computations, especially in the non-deterministic case, where a larger framework than B\"uchi automata needs to be used. But it becomes easy in some cases. We prove one particular case that will be useful to prove our main statements. 

\begin{lmm} \label{lem:intersecbuechi}
Let $L_1,L_2$ be two $\omega$-languages on the same alphabet $A$ accepted by B\"uchi automata. If one of the automata has only terminal states, then there is a B\"uchi automaton accepting $L_1\cap L_2$.
\end{lmm}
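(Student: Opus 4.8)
The plan is to use the standard synchronized product construction for the two automata, and to observe that the special hypothesis is precisely what makes the product's acceptance condition collapse to a single B\"uchi condition, so that no auxiliary counter (as in the general intersection of two B\"uchi automata) is needed. Write $\mathcal{A}=(Q_1,A,E_1,I_1,T_1)$ for an automaton accepting $L_1$ and $\mathcal{B}=(Q_2,A,E_2,I_2,T_2)$ for one accepting $L_2$, and assume without loss of generality that $\mathcal{B}$ is the one with $T_2=Q_2$. The general difficulty in intersecting B\"uchi languages is that an accepting run of the product must pass through terminal states of \emph{both} factors infinitely often, and these events need not be synchronized; here the second requirement is vacuous because every state of $\mathcal{B}$ is terminal, so any infinite run of $\mathcal{B}$ hits $T_2$ infinitely often automatically.

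First I would define the product automaton $\mathcal{C}=(Q_1\times Q_2,\,A,\,E,\,I_1\times I_2,\,T_1\times Q_2)$, where the edge set consists of all triples $((p_1,p_2),a,(q_1,q_2))$ such that $(p_1,a,q_1)\in E_1$ and $(p_2,a,q_2)\in E_2$. An infinite path in $\mathcal{C}$ labelled by a word $w\in A^\omega$ projects, coordinate by coordinate, to an infinite path in $\mathcal{A}$ and an infinite path in $\mathcal{B}$, both labelled by the same $w$; conversely, any pair consisting of an infinite $\mathcal{A}$-path and an infinite $\mathcal{B}$-path with a common label $w$ can be zipped together into an infinite $\mathcal{C}$-path labelled by $w$. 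This correspondence is a genuine bijection between runs of $\mathcal{C}$ and compatible pairs of runs of $\mathcal{A}$ and $\mathcal{B}$, and it is exactly the feature that lets non-determinism pass through the construction without trouble.

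Next I would verify that $\mathcal{C}$ accepts $L_1\cap L_2$. The initial condition is immediate, since a $\mathcal{C}$-path starts in $I_1\times I_2$ precisely when its two projections start in $I_1$ and $I_2$. For the acceptance condition, the key observation is that a prefix of a $\mathcal{C}$-path ends in $T_1\times Q_2$ if and only if its first projection ends in $T_1$, because the second coordinate always lies in $Q_2$; hence a $\mathcal{C}$-path has infinitely many successful subpaths (in the sense of the definition above) if and only if its $\mathcal{A}$-projection does. Since $T_2=Q_2$, every infinite $\mathcal{B}$-run is automatically successful, so the existence of an accepting $\mathcal{C}$-run on $w$ is equivalent to the simultaneous existence of an accepting $\mathcal{A}$-run on $w$ and of some infinite $\mathcal{B}$-run on $w$, that is, to $w\in L_1$ and $w\in L_2$.

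I expect the only point requiring care, rather than a genuine obstacle, to be the bookkeeping around the paper's formulation of acceptance as ``infinitely many successful subpaths'': one must check that a successful subpath of $\mathcal{C}$ matches a successful subpath of $\mathcal{A}$, which is exactly the prefix observation above, together with the fact that the all-terminal hypothesis renders the $\mathcal{B}$-side condition trivially satisfied along every infinite run. Once these two remarks are in place, the equality $L_1\cap L_2 = L(\mathcal{C})$ follows, and since $\mathcal{C}$ is finite this exhibits the required B\"uchi automaton.
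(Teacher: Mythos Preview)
Your proposal is correct and is essentially the same construction and argument as the paper's: both build the synchronized product $Q_1\times Q_2$ with componentwise edges and product initial set, and both use the all-terminal hypothesis on one factor to see that a prefix of a product run ends in the product terminal set exactly when the projection to the nontrivial factor does, so that the B\"uchi condition of the product coincides with that of the nontrivial factor. The only cosmetic difference is that the paper writes the terminal set as $T_{\mathcal{A}}\times T_{\mathcal{B}}$ while you write $T_1\times Q_2$; under your assumption $T_2=Q_2$ these are literally the same set.
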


\begin{proof}
Define $\mathcal{A}\times \mathcal{B}=(Q_{\mathcal{A}}\times Q_{\mathcal{B}},A,E,I_{\mathcal{A}}\times I_{\mathcal{B}},T_{\mathcal{A}}\times T_{\mathcal{B}})$, where $E$ consists of the edges $(a,b) \overset{d} \rightarrow (a',b')$ with $a\overset{d}\rightarrow a'$ and $b\overset{d}\rightarrow b'$. Let $w\in A^\omega$ be a word that is accepted by $\mathcal{A}\times \mathcal{B}$.
Then there exists an infinite path in the automaton. 
Projecting to the first coordinate gives an infinite path through~$\mathcal{A}$. Therefore, we have $w\in L_1$ and with the same reasoning $w\in L_2$. Now let $w\in L_1\cap L_2$. There exists a path $a_0 a_1 \cdots$ through $\mathcal{A}$ and a path $b_0 b_1 \cdots$ through~$\mathcal{B}$. Then $(a_0,b_0) (a_1,b_1) \cdots$ is a path in the product automaton.
Assume w.l.o.g.\ that all states of $\mathcal{A}$ are terminal. 
Then, for every finite subpath $b_0 b_1 \cdots b_k$ accepted by~$\mathcal{B}$, the corresponding path $a_0 a_1 \cdots a_k$ in $\mathcal{A}$ is also accepted, hence $(a_0,b_0) (a_1,b_1) \cdots$ is successful.
\end{proof}

In general, if $\Delta_{p,q,r}\cap \K$ is described by a B\"uchi automaton $\mathcal{A}$ and the boundary $\partial \K$ by a B\"uchi automaton $\mathcal{G}$, then $\partial \K \cap \Delta_{p,q,r}$ is described by the product automaton $\mathcal{A} \times \mathcal{G}$. Interpreting this B\"uchi automaton as a graph directed construction for $\partial \K \cap \Delta_{p,q,r}$, we will have a way to compute the Hausdorff dimension of this set via results of Mauldin and Williams~\cite{MW88}. Let us state and prove our main statements.

\begin{thm}\label{thm:intersecbuechi}
Let $p,q,r \in \Z$, $\Delta_{p,q,r}$ as in \eqref{e:Delta} and $\K$ the Knuth Twin Dragon. Then the intersection $\K \cap \Delta_{p,q,r}$ can  be described by a B\"uchi automaton.
\end{thm}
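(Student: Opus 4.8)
The plan is to construct the Büchi automaton $\mathcal{A}$ accepting exactly those digit sequences $b_1 b_2 \cdots \in \mathcal{D}^{\N}$ that satisfy the second equation in Lemma~\ref{l:1}, namely $r = \sum_{k\ge 1} (p\,\mathfrak{R}(b_k)+q\,\mathfrak{I}(b_k))/(-4)^k$. By Lemma~\ref{l:1}, the $\omega$-language of such an automaton is in bijection with $\K \cap \Delta_{p,q,r}$, so producing it suffices. The key observation is that each digit $b \in \mathcal{D}$ contributes a single integer weight $c(b) := p\,\mathfrak{R}(b)+q\,\mathfrak{I}(b) \in \Z$, so the condition becomes a purely one-dimensional numeration statement: we must recognize the base-$(-4)$ expansions (with digit set $\{c(b) : b \in \mathcal{D}\}$) of the real number $r$.

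First I would fix the weighted alphabet. Set $A = \mathcal{D}$ and, for a prospective tail read so far, track the remaining value that still has to be produced. Concretely, after reading $b_1 \cdots b_k$ the partial sum is $\sum_{j=1}^k c(b_j)/(-4)^j$, and the tail $\sum_{j>k} c(b_j)/(-4)^j$ must equal $r$ minus this partial sum. Multiplying through by $(-4)^k$, I would define the state after reading $k$ digits to be the integer $s_k = (-4)^k\bigl(r - \sum_{j=1}^k c(b_j)/(-4)^j\bigr)$, which satisfies the recursion $s_k = -4\,s_{k-1} - c(b_k)$ with $s_0 = r$. The accepting (and only) condition is that $s_k \to 0$ appropriately, i.e. that the infinite tail represents the right real number; so the states are integers and the edge $s \xrightarrow{\,b\,} s'$ exists precisely when $s' = -4s - c(b)$.

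The crucial step, and the main obstacle, is showing that only finitely many states are reachable, so that the automaton is genuinely finite. For this I would prove that the reachable $s_k$ are bounded: since the tail value $\sum_{j\ge 1} c(b_{k+j})/(-4)^j$ lies in a bounded interval determined by $\max_{b}|c(b)|$ and the geometric factor $1/(|-4|-1) = 1/3$, the state $s_k$ must lie in a fixed finite range of integers (roughly $|s_k| \le \max_b |c(b)|/3$ up to a constant). Restricting the state set to these finitely many integers, with all of them terminal, yields a finite Büchi automaton; an infinite path corresponds exactly to a valid digit sequence whose tail realizes $r$, which is the content of the acceptance condition. I would take some care to verify that the boundedness constant is correct and that every bounded state indeed admits a continuation, so that the recognized language is neither too large nor too small.

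Finally, with $\mathcal{A}$ in hand, every reachable state is terminal by construction, so $\mathcal{A}$ meets the hypothesis of Lemma~\ref{lem:intersecbuechi}; this both completes the present theorem and sets up the intersection with the boundary automaton $\mathcal{G}$ needed for the Hausdorff dimension computation. The bijection with $\K \cap \Delta_{p,q,r}$ is then immediate from Lemma~\ref{l:1}, since the first equation there merely decodes the accepted digit sequence into the point $z = \sum_{k\ge 1} b_k/(-4)^k$.
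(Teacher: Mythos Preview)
Your proposal is correct and follows essentially the same approach as the paper: track the integer carry $s_k$ via the recursion $s_k = -4s_{k-1} - c(b_k)$, bound $|s_k|$ by $\max_b|c(b)|/3$ using the geometric tail estimate, and take this finite set of integers as the state set with all states terminal. The only cosmetic difference is a global sign (the paper starts from $s_0=-r$ with edge rule $s'=c(b)-4s$, so its states are the negatives of yours), and your caveat about checking that ``every bounded state admits a continuation'' is unnecessary: once all states are terminal, an infinite word is accepted iff the path stays in the bounded set, which is equivalent to $s_k/(-4)^k\to 0$ and hence to the condition of Lemma~\ref{l:1}.
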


\begin{proof}
For $s,s'\in \Z$ we define an edge relation by
\begin{equation} \label{edge}
s\overset{b}{\rightarrow} s' \Longleftrightarrow s' = p\, \mathfrak{R}(b) + q\, \mathfrak{I}(b) - 4s.
\end{equation}
Now consider a path $-r = s_0 \overset{b_1}{\rightarrow} s_1 \overset{b_2}{\rightarrow} \cdots \overset{b_n}{\rightarrow} s_n$. 
Then
\[
s_n = (-4)^n(-r) + \sum_{k=1}^n (-4)^{n-k} \big(p\, \mathfrak{R}(b_k) + q\, \mathfrak{I}(b_k)\big),
\]
i.e.,
\[
\frac{s_n}{(-4)^n} = -r +\sum_{k=1}^n \frac{p\, \mathfrak{R}(b_k) + q\, \mathfrak{I}(b_k)}{(-4)^k}.
\]
Using Lemma \ref{l:1}, we immediately get that 
\[
(x,y) = [0.b_1 b_2 b_3 \cdots]_{-4}\in \K \cap \Delta_{p,q,r} \quad \mbox{if and only if} \quad \lim_{n\to \infty} \frac{s_n}{(-4)^n} =0.
\]
We now show that the elements $s_n$ lying on paths starting with $s_0=-r$ and $\lim_{n\to \infty} \frac{s_n}{(-4)^n} =0$ are bounded by a constant $c(p,q)$. 
Indeed, we have
\[
\frac{s_n}{(-4)^n} = -r + \sum_{k=1}^n \frac{p\, \mathfrak{R}(b_k) + q\, \mathfrak{I}(b_k)}{(-4)^k} = -\sum_{k=n+1}^{\infty} \frac{p\, \mathfrak{R}(b_k) + q\, \mathfrak{I}(b_k)}{(-4)^k},
\]
and therefore
\[
|s_n| = 4^n \left |\sum_{k=n+1}^{\infty} \frac{p\, \mathfrak{R}(b_k) + q\, \mathfrak{I}(b_k)}{(-4)^k}\right| \le \frac{\max\{|p\, \mathfrak{R}(b) + q\, \mathfrak{I}(b)| : b\in \D\}}{3} = c(p,q).
\]
Defining the set of states $Q = \{s\in \Z: |s|\leq c(p,q)\} \cup \{-r\}$, $I=\{-r\}$, $T=Q$ and edges as in \ref{edge}, gives us the desired B\"uchi automaton. 
\end{proof}

\begin{thm} \label{t:Hd}
Let $p,q,r \in \Z$, $\Delta_{p,q,r}$ as in \eqref{e:Delta} and $\K$ the Knuth Twin Dragon. 
Then the Hausdorff dimension of the intersection $\partial \K \cap \Delta_{p,q,r}$ is never $\mathfrak{s}-1$, where $\mathfrak{s}$ is the Hausdorff dimension of~$\partial \K$. 
\end{thm}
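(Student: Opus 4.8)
The plan is to realize $\partial\K\cap\Delta_{p,q,r}$ as a graph-directed self-similar set, read its Hausdorff dimension off a Perron eigenvalue, and then observe that the value $\mathfrak{s}-1$ would force that eigenvalue to be a non-integral algebraic number, which is impossible.

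First I would produce a single B\"uchi automaton in base $-4$ recognizing the $(-4)$-expansions of $\partial\K\cap\Delta_{p,q,r}$. Theorem~\ref{thm:intersecbuechi} supplies an automaton $\mathcal{A}$ for $\K\cap\Delta_{p,q,r}$ over the alphabet $\D$, all of whose states are terminal. Since $\alpha^4=-4$, I regroup the boundary automaton $\mathcal{G}$ of Figure~\ref{fig:G} into blocks of four $\alpha$-digits, obtaining an automaton $\mathcal{G}'$ over the same alphabet $\D$; its incidence matrix is the fourth power of that of $\mathcal{G}$, so its spectral radius is $\lambda^4$. Because $\mathcal{A}$ has only terminal states, Lemma~\ref{lem:intersecbuechi} applies to the product $\mathcal{A}\times\mathcal{G}'$, which then recognizes exactly the $(-4)$-expansions of the points of $\partial\K\cap\Delta_{p,q,r}$.

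Next I would interpret $\mathcal{A}\times\mathcal{G}'$ as a Mauldin--Williams graph-directed construction on the line $\Delta_{p,q,r}$. Each edge, labelled by a digit $b\in\D$, corresponds to the restriction to a line of the similarity $z\mapsto(z+b)/(-4)$, which contracts Euclidean lengths by the uniform factor $1/4$, while the states record the finitely many translated lines that occur. With uniform ratio $1/4$ and integer incidence matrix $M$, the dimension formula of~\cite{MW88} gives $\dim_H(\partial\K\cap\Delta_{p,q,r})=(\log\mu)/(\log 4)$, where $\mu$ is the spectral radius of $M$, equivalently the largest Perron eigenvalue over the strongly connected components once the open set condition has been checked from the disjointness of cylinders along the line.

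The crux is then purely arithmetic. The number $\mu$ is an eigenvalue of a nonnegative integer matrix (indeed of an integer principal submatrix), hence an algebraic integer. Rewriting $\mathfrak{s}=(\log\lambda)/(\log\sqrt 2)$ in base $4$ yields $\mathfrak{s}-1=(\log(\lambda^4/4))/(\log 4)$, so the forbidden equality $\dim_H(\partial\K\cap\Delta_{p,q,r})=\mathfrak{s}-1$ is equivalent to $\mu=\lambda^4/4$. Since $\lambda$ is a root of the monic polynomial $x^3-x^2-2$, the product of its three conjugates equals $2$, so the product of the conjugates of $\lambda^4/4$ equals $2^4/4^3=1/4$; the constant term of the minimal polynomial of $\lambda^4/4$ is therefore $\pm 1/4\notin\Z$, and $\lambda^4/4$ is not an algebraic integer. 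This contradicts $\mu=\lambda^4/4$, so the equality can never hold. I expect the real work to lie in the graph-directed step, namely setting up the construction rigorously and verifying the separation condition so that~\cite{MW88} applies with exactly the factor $1/4$, whereas the closing computation is immediate.
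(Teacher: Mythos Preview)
Your proposal is correct and follows essentially the same route as the paper: build the base-$(-4)$ product automaton $\mathcal{A}\times\mathcal{G}$ via Lemma~\ref{lem:intersecbuechi}, read off the Hausdorff dimension as $(\log\beta)/(\log 4)$ from the Mauldin--Williams theory (the separation issue you flag is handled in the paper's Remark~\ref{rem:MW}), and conclude by showing $\lambda^4/4$ is not an algebraic integer. The only cosmetic difference is that you argue via the norm $\prod_j\lambda_j^4/4^3=2^4/4^3=1/4\notin\Z$, whereas the paper exhibits the minimal polynomial $4x^3-9x^2+2x-1$ directly; both are equivalent one-line checks.
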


\begin{proof}
The B\"uchi automaton of Theorem~\ref{thm:intersecbuechi} gives rise to a description of the intersection $\K \cap \Delta_{p,q,r}$ as the attractor of a graph directed construction (\emph{GIFS}):
\[
\K \cap \Delta_{p,q,r} = K_{-r}, \quad \mbox{with} \quad K_s=\bigcup_{s\overset{b}{\rightarrow} s'\in \mathcal{A}} \frac{K_{s'} +b}{-4} \qquad (s \in Q). 
\] 
As mentioned above, $\partial \K$ is also the attractor of a GIFS:
\[
\partial \K=\bigcup_{g\in Q'} K_g, \quad \mbox{with} \quad K_g = \bigcup_{g\overset{b}{\rightarrow} g'\in \mathcal{G}} \frac{K_{g'} +b}{-4} \qquad (g \in Q'),
\]
where $\mathcal{G}$ is the automaton characterizing $\partial \K$ in base $-4$.
The automaton $\mathcal{G}$ can be obtained from the automaton $\mathcal{G}'$ of Figure~\ref{fig:G}  as follows. 
\begin{itemize}
\item The set of states $Q'$ is the same as for $\mathcal{G}'$; all states are initial and terminal.
\item There is an edge from $g$ to $g'$ in $\mathcal{G}$ whenever there is a path of length $4$ from $g$ to $g'$ in $\mathcal{G}'$. The label of this edge in $\mathcal{G}$ is the digit vector $[d_1d_2d_3d_4]_{\alpha}$ corresponding to the labels $d_1,d_2,d_3,d_4$ in $\mathcal{G}'$ along the path of length~$4$. 
\end{itemize}
In that way, $\mathcal{A}$ and $\mathcal{G}$ are built on the same alphabet. By Lemma~\ref{lem:intersecbuechi}, the intersection $\mathcal{A}\times\mathcal{G}$ is a B\"uchi automaton describing  the intersection $\Delta_{p,q,r}\cap\partial\K$.  By Mauldin and Williams~\cite{MW88}, the Hausdorff dimension of a GIFS attractor can be computed from the spectral radius $\beta$ of the incidence matrix of a strongly connected component of the associated automaton; see further details in Remark~\ref{rem:MW}. In particular, in our case, 
\[
\mathrm{dim}_H(\partial \K \cap \Delta_{p,q,r}) = \frac{\log\beta}{\log{4}},
\]
where the involved number $\beta$ is an algebraic integer.

Now, the dimension of the boundary of the Twin Dragon is $\mathfrak{s} = \frac{\log\lambda}{\log\sqrt{2}}$, with $\lambda^3=\lambda^2+2$. To have $\frac{\log\beta}{\log{4}} = \mathfrak{s}-1$, we need $\beta =\frac{\lambda^4}{4}$. However, the minimal polynomial of $\frac{\lambda^4}{4}$ is  $4x^3-9x^2+2x-1$, thus $\frac{\lambda^4}{4}$ is not an algebraic integer.
\end{proof}

\begin{remark}\label{rem:MW}
We shortly explain why the results of Mauldin and Williams~\cite{MW88} indeed apply to our setting.  All the similarities in our graphs are contractions of the form $T(x)=\frac{x+b}{-4}$, with the same ratio $-\frac{1}{4}$. Therefore, if $G$ denotes any of our graphs, we only need to check the existence of nonoverlapping compact sets $J_1,\ldots, J_n$ (one for each node $1,\ldots,n$ of $G$) with the property
\[
\forall \,i\in\{1,\ldots,n\},\;J_i\supset\bigcup_{i\xrightarrow{T}j\in G} T(J_j),
\]
each union being nonoverlapping.

For the graph $G=\mathcal{G}$ of our paper (with states $g\in Q'$), the intersections of $\mathcal{K}$  with its six neighboring tiles in the plane tiling generated by $\mathcal{K}$ are compact sets playing the role of the $J_i$'s, that is, satisfying the above nonoverlapping conditions; see for example~\cite{AkiyamaThuswaldner05}. These intersections are exactly the sets  $K_{g}$ defined in the proof of Theorem~\ref{t:Hd}. 

Now, the graph $G=\mathcal{A}\times\mathcal{G}$ of our paper can be interpreted  as a \emph{subgraph} of~$\mathcal{G}$: taking the product of $\mathcal{A}$ and $\mathcal{G}$ means to  select paths of~$\mathcal{G}$. The states of $\mathcal{A}\times\mathcal{G}$ are of the form $(r,g)$, for some integers $r$ and $g\in Q'$.  Defining 
\[
K_{r,g} := \Delta_{p,q,-r} \cap K_{g},
\]
we obtain compact sets fulfilling the nonoverlapping requirements mentioned above.
\end{remark}

\end{section}

\section{Further results of intersections of the Twin Dragon with rational lines} \label{int}

In this section, we want to extend the work of \cite{AS05}, where the intersections with the $x$-and the $y$-axis are calculated. 
The intersections of these lines with $\partial K$ are significatively different from the expected result for intersections of fractals and lines, as they consist only of two points. 
First, we show that their result extends to uncountably many axis-parallel lines (where we do not have finite automata), and using the self-similar structure, to diagonal lines.
Then we give one example of a more complicated intersection.  

\begin{thm} \label{vert}
Let $a_1 a_2 \cdots$ be a sequence in $\{0,1\}^\omega$ not ending in $(01)^\omega$, and
\[
r = \sum_{k=1}^\infty \frac{2 a_k}{(-4)^k}.
\]
Then 
\[
\partial \K \cap \Delta_{1,0,r} = \left\{r + \left(r-\tfrac{2}{5}\right) i,\, r + \left(r+\tfrac{3}{5}\right) i\right\},
\]
and $\K \cap \Delta_{1,0,r}$ is the closed line segment $r + \left[r-\frac{2}{5}, r+\frac{3}{5}\right] i$.
\end{thm}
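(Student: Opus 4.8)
The plan is to combine Lemma~\ref{l:1} with a carry (finite-state) analysis of the admissible real parts, then fill in the imaginary parts, and finally read off the boundary from the neighbour structure of the tiling generated by~$\K$.

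First I would reduce to a condition on real parts. By Lemma~\ref{l:1} with $p=1$, $q=0$, a point $z=\sum_{k\ge1}b_k(-4)^{-k}$ ($b_k\in\D$) lies in $\K\cap\Delta_{1,0,r}$ if and only if $\sum_k \mathfrak{R}(b_k)(-4)^{-k}=r=\sum_k 2a_k(-4)^{-k}$. Reading the table for $\D$ shows $\mathfrak{R}(b_k)\in\{-1,0,1,2,3\}$, so with $e_k:=\mathfrak{R}(b_k)-2a_k$ the condition becomes $\sum_k e_k(-4)^{-k}=0$, where $e_k\in\{-1,\dots,3\}$ if $a_k=0$ and $e_k\in\{-3,\dots,1\}$ if $a_k=1$. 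Introducing carries $u_0=0$, $u_n=-4u_{n-1}+e_n$, one has $u_n=\sum_{k\le n}e_k(-4)^{n-k}\in\Z$ and $u_n/(-4)^n=\sum_{k\le n}e_k(-4)^{-k}$; since $|e_k|\le 3$, the vanishing of the full sum is equivalent to $(u_n)$ staying bounded, which forces $u_n\in\{-1,0,1\}$ for all~$n$. This turns the problem into following infinite paths in a three-state automaton on $\{-1,0,1\}$: from $0$ one may move to $-1,0,1$ (via $e=-1,0,1$, allowed for either value of $a_n$); from $1$ the only move is $e=3$ to $-1$, requiring $a_n=0$; from $-1$ the only move is $e=-3$ to $1$, requiring $a_n=1$.

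The heart of the argument, and the step I expect to be the main obstacle, is now to observe that once a path leaves $0$ it is trapped in the forced $2$-cycle between $1$ and~$-1$, and that remaining there forces the tail $a_{n+1}a_{n+2}\cdots$ to equal the alternating word $(01)^\omega$ (when the excursion starts at state~$1$) or $(10)^\omega$ (when it starts at~$-1$). Both tails are excluded by the hypothesis that $(a_k)$ does not end in $(01)^\omega$ (note $(10)^\omega$ ends in $(01)^\omega$ as well). Hence the only admissible path stays at~$0$, i.e.\ $e_k=0$ and $\mathfrak{R}(b_k)=2a_k$ for every~$k$. The delicate part here is precisely the bookkeeping that matches each of the two escape routes with a forbidden alternating tail, so that the hypothesis comes out sharp.

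With the real parts pinned down, the imaginary parts fill the segment. Reading off $\D$, when $\mathfrak{R}(b_k)=2a_k$ is fixed the value $\mathfrak{I}(b_k)$ ranges freely over $M_k=\{-2,-1,0,1\}$ if $a_k=0$ and over $M_k=\{0,1,2,3\}$ if $a_k=1$, in both cases a block of four consecutive integers. Thus $\mathfrak{I}(z)=\sum_k m_k(-4)^{-k}$ with the $m_k\in M_k$ chosen independently; since each $M_k$ consists of $4=|{-4}|$ consecutive integers, consecutive digit translates meet and the set of attainable sums is a full closed interval (I would record this completeness as a one-line lemma). Computing the two extremal choices (largest digit at even indices, smallest at odd, and vice versa) using $\sum_{n\text{ even}}4^{-n}=\tfrac1{15}$, $\sum_{n\text{ odd}}4^{-n}=\tfrac4{15}$ and $r=\sum_k 2a_k(-4)^{-k}$ yields $y_{\max}=r+\tfrac35$ and $y_{\min}=r-\tfrac25$, so $\K\cap\Delta_{1,0,r}=r+[\,r-\tfrac25,\ r+\tfrac35\,]\,i$.

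For the boundary I would use the neighbour description $\partial\K=\bigcup_{\ell\neq0}\K\cap(\K+\ell)$ over the six neighbours $\ell$ (Remark~\ref{rem:MW}). The same extremal computation gives that the real part of points of $\K$ ranges over $[-\tfrac{13}{15},\tfrac7{15}]$ and that $r\in[-\tfrac8{15},\tfrac2{15}]$, with the endpoints attained only by the alternating sequences $(01)^\omega,(10)^\omega$; hence under the hypothesis $r\in(-\tfrac8{15},\tfrac2{15})$ strictly. Consequently any neighbour with $\mathfrak{R}(\ell)\neq0$ has $|\mathfrak{R}(\ell)|\ge1$, so $r-\mathfrak{R}(\ell)\notin[-\tfrac{13}{15},\tfrac7{15}]$ and $(\K+\ell)\cap\Delta_{1,0,r}=\emptyset$; the only neighbours meeting the line are $\pm i$, and $(\K\pm i)\cap\Delta_{1,0,r}$, being the vertical translate by $\pm i$ of the segment above, meets it exactly at its top, respectively bottom, endpoint. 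Equivalently, these two endpoints are the highest and lowest points of $\K$ on the line, hence lie on $\partial\K$, while each interior point of the segment lies in no other tile and therefore has a two-dimensional neighbourhood inside~$\K$. This gives $\partial\K\cap\Delta_{1,0,r}=\{r+(r-\tfrac25)i,\ r+(r+\tfrac35)i\}$, completing the proof.
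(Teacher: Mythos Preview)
Your argument is correct, and the first half (pinning down $\mathfrak{R}(b_k)=2a_k$ via a three-state carry automaton) is just a rephrasing of the paper's tail-bound computation; both identify the escape from state~$0$ with a forbidden alternating tail. The second half, however, is organised differently. The paper never computes the imaginary parts directly: it first runs the constrained four-digit blocks $a_k000,\,a_k011,\,a_k100,\,a_k111$ through the boundary automaton~$\mathcal{G}$ of Figure~\ref{fig:G}, finds that from $g_1,g_2,g_5,g_6$ one is again forced into an alternating $(a_k)$, and reads off the two boundary points from the $g_3\leftrightarrow g_4$ cycle; only then does it conclude the intersection is a segment, using that $r(1+i)\in\K$ lies between them. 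You instead show directly that, once $\mathfrak{R}(b_k)$ is fixed, the admissible $\mathfrak{I}(b_k)$ form a full residue system mod~$4$, so the imaginary parts sweep a translate of the standard $(-4)$-tile and hence an interval; you then locate $\partial\K$ on the line via the lattice neighbours, observing that only $\ell=\pm i$ can meet~$\Delta_{1,0,r}$ because $r\in(-\tfrac{8}{15},\tfrac{2}{15})$ strictly. Your route avoids the boundary automaton entirely and is more geometric, at the cost of importing the neighbour list and the extremal real parts of~$\K$ (both recorded elsewhere in the paper). The paper's route is more in keeping with its automaton machinery and reuses~$\mathcal{G}$, which is also needed in the subsequent theorems.
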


\begin{figure}[ht]
\centerline{\includegraphics{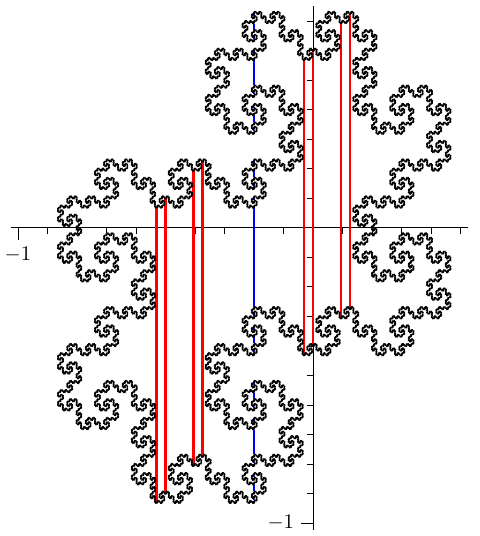}}
\caption{The Knuth Twin Dragon~$\K$ and its intersection with $\Delta_{1,0,r}$ for some $r$ as in Theorem~\ref{vert} (red) and with $\Delta_{1,0,-1/5}$ (blue).} \label{f:vert}
\end{figure}

\begin{proof}
We first use Lemma~\ref{l:1} to describe $\K \cap \Delta_{1,0,r}$, i.e., we determine the sequences $b_1 b_2 \cdots \in \D$ such that $\mathfrak{R}\left(\sum_{k=1}^\infty b_k (-4)^{-k}\right) = r$, i.e.,
\[
\sum_{k=1}^\infty \frac{2a_k-\mathfrak{R}(b_k)}{(-4)^k} = 0.
\]
Since $\mathfrak{R}(b_k) \in \{-1,0,1,2,3\}$, we have $2a_k - \mathfrak{R}(b_k) \in \{-3,-2,\dots,2,3\}$ and thus 
\[
\left|\sum_{k=n+1}^\infty \frac{2a_k-\mathfrak{R}(b_k)}{(-4)^k}\right| \le \frac{1}{4^n} \quad \mbox{for all}\ n \ge 0.
\]
Moreover, equality holds if and only if $2a_k - \mathfrak{R}(b_k)$ is alternately $3$ and~$-3$, which implies that $a_k$ is alternately $1$ and $0$, which we have excluded. 
This gives that 
\[
\left|\sum_{k=n+1}^\infty \frac{2a_k-\mathfrak{R}(b_k)}{(-4)^k}\right| < \frac{1}{4^n} \quad \mbox{and} \quad \sum_{k=n+1}^\infty \frac{2a_k-\mathfrak{R}(b_k)}{(-4)^k} = \sum_{k=1}^n \frac{\mathfrak{R}(b_k)-2a_k}{(-4)^k} \in \frac{\Z}{4^n}
\]
for all $n \ge 1$, hence $\mathfrak{R}(b_k) = 2 a_k$ for all $k \ge 1$. 
For the corresponding sequences $d_1 d_2 \cdots$ (with $\sum_{j=0}^3 d_{4k-j} \alpha^j = b_k$) this implies that
\begin{equation} \label{e:dak}
d_{4k-3} d_{4k-2} d_{4k-1} d_{4k} \in \{a_k 000, a_k 011, a_k 100, a_k111\} \quad \mbox{for all}\ k \ge 1.
\end{equation}

Now consider sequences $d_1d_2\cdots$ of the form~\eqref{e:dak} in the boundary automaton~$\mathcal{G}$ given in Figure~\ref{fig:G}.
The only paths labeled by $abcc$, $a,b,c\in\{0,1\}$, starting from $g_1$, $g_2$, $g_5$ and $g_6$ respectively are 
\[
g_1 \overset{0000}{\longrightarrow} g_6, g_1 \overset{0011}{\longrightarrow} g_2,  g_2 \overset{1000}{\longrightarrow} g_5, g_2 \overset{1011}{\longrightarrow} g_1, g_5 \overset{0100}{\longrightarrow} g_6, g_5 \overset{0111}{\longrightarrow} g_2,  g_6 \overset{1100}{\longrightarrow} g_5, g_6 \overset{1111}{\longrightarrow} g_1. 
\]
Therefore, for an infinite successful path of the form~\eqref{e:dak} starting from $g_1$, $g_2$, $g_5$ or~$g_6$, the sequence $a_1a_2\cdots$ is alternately $0$ and~$1$, which we have excluded. 
Hence, it suffices to consider paths that are in $g_3$ and~$g_4$ after $4k$ steps for all $k \ge 0$. 
From
\[
g_3 \overset{a100}{\longrightarrow} g_4 \quad \mbox{and} \quad g_4 \overset{a011}{\longrightarrow} g_3 \qquad (a \in \{0,1\}), 
\]
we see that the only points in $\partial \K \cap \Delta_{1,0,r}$ are 
\[
\begin{split}
\sum_{k=1}^\infty \frac{a_k\alpha^3}{(-4)^k} + \sum_{k=1}^\infty \frac{\alpha^6 + \alpha + 1}{16^k} & = r\,(1+i) + \frac{3i}{5}, \\
\sum_{k=1}^\infty \frac{a_k\alpha^3}{(-4)^k} + \sum_{k=1}^\infty \frac{\alpha^5 + \alpha^4 + \alpha^2}{16^k} & = r\,(1+i) - \frac{2i}{5}.
\end{split}
\]

Since $r\,(1+i) \in \K$, $\K \cap \Delta_{1,0,r}$ is the line segment between these points. 
\end{proof}

\begin{thm}
For $-\frac{8}{15} < r <  \frac{2}{15}$, we have 
\[
\begin{split}
-2i\, (\K \cap \Delta_{0,1,r/2}) & = (\K \cap \Delta_{1,0,r}) + \{0,i\}, \\
(-1+i)\, (\K \cap \Delta_{1,1,-r}) & = \K \cap \Delta_{1,0,r}\,, \\
(-1+i)\, (\K \cap \Delta_{1,-1,r/2}) & = (\K \cap \Delta_{0,1,r/2}) + \{0,1\}, \\
2\,(1+i)\, (\K \cap \Delta_{1,-1,r/2}) & =  (\K \cap \Delta_{1,0,r}) + \{-2i,-i,0,i\}.
\end{split}
\]
In particular, for $r$ as in Theorem~\ref{vert}, the sets $\K \cap \Delta_{0,1,r/2}$, $\K \cap \Delta_{1,1,-r}$ and $\K \cap \Delta_{1,-1,r/2}$ are closed line segments with endpoints
\[
\begin{split}
\partial \K \cap \Delta_{0,1,r/2} & = \partial (\K \cap \Delta_{0,1,r/2}) = \left\{-\tfrac{4}{5} - \tfrac{r}{2} + \tfrac{r}{2}\,i, \tfrac{1}{5} - \tfrac{r}{2} + \tfrac{r}{2}\, i\right\}, \\
\partial \K \cap \Delta_{1,1,-r} & = \partial (\K \cap \Delta_{1,1,-r}) = \left\{-\tfrac{1}{5} + \left(\tfrac{1}{5}-r\right) i, \tfrac{3}{10} - \left(\tfrac{3}{10} + r\right) i\right\}, \\
\partial \K \cap \Delta_{1,-1,r/2} & = \partial (\K \cap \Delta_{1,-1,r/2}) = \left\{-\tfrac{3}{5} + \tfrac{r}{2} - \tfrac{3}{5}\, i, \tfrac{2}{5} + \tfrac{r}{2} + \tfrac{2}{5}\, i\right\}.
\end{split}
\]
\end{thm}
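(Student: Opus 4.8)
The plan is to exploit the self-similarity of $\K$, observing that all four prefactors are powers of $\alpha=-1+i$: indeed $\alpha^2=-2i$ and $\alpha^3=2(1+i)$, while $\alpha$ itself appears directly. Thus every relation has the shape $\alpha^n(\K\cap\Delta)=(\text{union of translates of }\K\cap\Delta')$. The two ingredients I would combine are, first, the set equation obtained by iterating $\alpha\K=\K\cup(\K+1)$, namely
\[
\alpha^n\K=\bigcup_{d_1\cdots d_n\in\{0,1\}^n}\bigl(\K+[d_1\cdots d_n]_\alpha\bigr),\qquad [d_1\cdots d_n]_\alpha=\sum_{j=1}^n d_j\,\alpha^{n-j},
\]
and second, the fact that $z\mapsto\alpha z$ is a bijection of $\mathbb{C}$, so that $\alpha^n(\K\cap\Delta)=(\alpha^n\K)\cap(\alpha^n\Delta)$.

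Next I would compute the action of $\alpha$ on lines. A direct substitution $z=\alpha^{-1}w$ gives $\alpha\,\Delta_{p,q,s}=\Delta_{-(p+q),\,p-q,\,2s}$, and iterating this shows that in each case the transformed line is exactly $\Delta_{1,0,r}$ (for $\alpha,\alpha^2,\alpha^3$ applied to $\Delta_{1,1,-r},\Delta_{0,1,r/2},\Delta_{1,-1,r/2}$) or $\Delta_{0,1,r/2}$ (for $\alpha$ applied to $\Delta_{1,-1,r/2}$). Substituting the set equation then expresses $\alpha^n(\K\cap\Delta)$ as a union $\bigcup_v(\K+v)\cap\Delta_{1,0,r}$ (resp.\ $\cap\,\Delta_{0,1,r/2}$), where $v$ runs over the translation vectors $[d_1\cdots d_n]_\alpha$.

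The heart of the argument is to decide which translates actually meet the target line. For the vertical line $\Delta_{1,0,r}=\{x=r\}$ I would first record the bound $\mathfrak{R}(\K)\subseteq[-\tfrac{13}{15},\tfrac{7}{15}]$, proved by optimizing $\mathfrak{R}(z)=\sum_k\mathfrak{R}(b_k)(-4)^{-k}$ digit by digit, since the real parts $\mathfrak{R}(b_k)\in\{-1,0,1,2,3\}$ may be chosen independently. Consequently $\K+v$ meets $\{x=r\}$ only if $\mathfrak{R}(v)-\tfrac{13}{15}\le r\le\mathfrak{R}(v)+\tfrac{7}{15}$; the translates with $\mathfrak{R}(v)=1$ are ruled out precisely by $r<\tfrac{2}{15}$ and those with $\mathfrak{R}(v)=-1$ precisely by $r>-\tfrac{8}{15}$, which is exactly the hypothesis on $r$. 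Only the translates with $\mathfrak{R}(v)=0$ survive, and since their offsets $v$ are then purely imaginary, hence parallel to the vertical line, one has $(\K+v)\cap\Delta_{1,0,r}=(\K\cap\Delta_{1,0,r})+v$. Collecting the surviving $v$ yields the translate sets $\{0,i\}$, $\{0\}$ and $\{-2i,-i,0,i\}$ of the first, second and fourth relations. The third relation is even simpler and needs no hypothesis on $r$: for the horizontal line $\Delta_{0,1,r/2}=\{y=r/2\}$ the translate $\K+1$ meets it in $(\K\cap\Delta_{0,1,r/2})+1$ because the offset $1$ is real, so $\alpha(\K\cap\Delta_{1,-1,r/2})=(\K\cap\Delta_{0,1,r/2})+\{0,1\}$ follows at once.

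Finally, for the ``in particular'' statement I would feed in Theorem~\ref{vert}: for $r=\sum_k 2a_k(-4)^{-k}$ the set $\K\cap\Delta_{1,0,r}$ is the vertical segment $r+[r-\tfrac{2}{5},r+\tfrac{3}{5}]i$ with the two stated boundary points. The four identities then present the remaining intersections as affine images of chains of translates of this segment, which are themselves segments; the listed endpoints are obtained by applying $\alpha^{-1}$, $\alpha^{-2}=\tfrac{i}{2}$ and $\alpha^{-3}=\tfrac{1-i}{4}$ to the outer endpoints of the combined segments (e.g.\ $(\K\cap\Delta_{1,0,r})+\{0,i\}$ is the segment $r+[r-\tfrac{2}{5},r+\tfrac{8}{5}]i$, whose image under $\tfrac{i}{2}$ gives the endpoints of $\K\cap\Delta_{0,1,r/2}$). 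For the boundary equalities $\partial\K\cap\Delta=\partial(\K\cap\Delta)$, one inclusion is automatic: the two endpoints of a chord of $\K$ lie on $\partial\K$. For the reverse I would transport the question through the homeomorphism $\alpha^n$, writing $\partial\K\cap\Delta=\alpha^{-n}\bigl(\partial(\alpha^n\K)\cap\Delta_{1,0,r}\bigr)$ and using that $\K\pm1$ and their neighbours stay at positive distance from the line, so that near $\Delta_{1,0,r}$ the large tile $\alpha^n\K$ coincides with the union of only the surviving sub-tiles. The main obstacle I anticipate is precisely here: one must verify that the interior seam points where two adjacent surviving sub-tiles meet (such as the junction $r+(r+\tfrac{3}{5})i$ between $\K$ and $\K+i$) lie in the interior of $\alpha^n\K$ rather than on its boundary, since otherwise spurious boundary points would appear. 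This reduces, via Theorem~\ref{vert}, to knowing that the relative interior of $\K\cap\Delta_{1,0,r}$ avoids $\partial\K$ and that the omitted sub-tiles cannot reach these seam points, the latter again guaranteed by the strict inequalities $-\tfrac{8}{15}<r<\tfrac{2}{15}$.
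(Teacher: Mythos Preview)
Your derivation of the four set identities and of the explicit segments is exactly the paper's argument: iterate $\alpha\K=\K\cup(\K+1)$, track how multiplication by $\alpha$ carries each line onto $\Delta_{1,0,r}$ (or $\Delta_{0,1,r/2}$), use the extremal values $\mathfrak{R}(\K)=[-\tfrac{13}{15},\tfrac{7}{15}]$ together with $-\tfrac{8}{15}<r<\tfrac{2}{15}$ to discard the translates with nonzero real shift, and then feed in Theorem~\ref{vert}.

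Where you diverge from the paper is the handling of the seam points, and here your sketch has a gap. Knowing that the \emph{omitted sub-tiles of $\alpha^n\K$} stay away from the seam is not the relevant condition: to conclude that the seam is interior to $\alpha^n\K$ you would need that every $\Z[i]$-translate of $\K$ containing the seam already lies inside $\alpha^n\K$, and for this you must (i)~invoke the lattice tiling property of $\K$ and (ii)~rule out the vertical neighbours such as $\K-i$ and $\K+2i$, which \emph{do} meet $\Delta_{1,0,r}$ and are therefore not excluded by the strict inequalities on~$r$. Both points are easy to supply (the second via Theorem~\ref{vert}), but without them the argument is incomplete.

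The paper avoids this topological detour entirely. It reformulates ``the seam is interior to $\alpha^n\K$'' as ``$\alpha^{-n}(\text{seam})\notin\partial\K$'' and checks this directly in the boundary automaton~$\mathcal{G}$. From the proof of Theorem~\ref{vert} one knows the $\alpha$-expansions of the two seam values $r(1+i)+\tfrac{3}{5}i$ and $r(1+i)-\tfrac{2}{5}i$ are accepted only from the states $g_3$ and $g_4$ respectively; dividing by $\alpha^2$ resp.\ $\alpha^3$ prepends $00$ resp.\ $000$, and since neither $g_3$ nor $g_4$ is reachable by such a prefix, the resulting words are rejected by~$\mathcal{G}$. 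This automaton check is what you are missing, and it is both shorter and more self-contained than the tiling argument you outline.
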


\begin{figure}[ht]
\centerline{\includegraphics{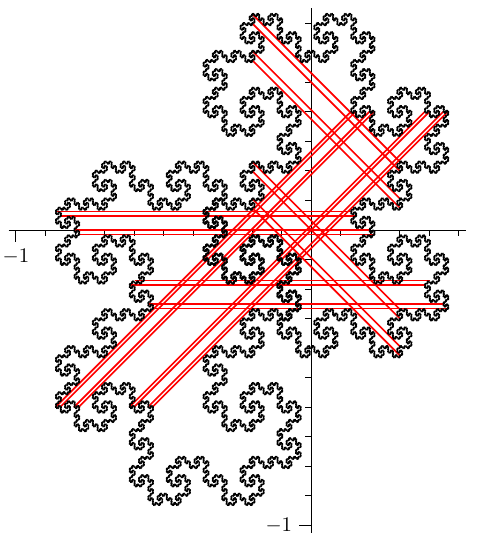}}
\caption{The intersection of $\K = \alpha^{-1} \left(\K \cup (\K+1)\right)$ with lines $\Delta_{0,1,r/2}$, $\Delta_{1,1,-r}$, and $\Delta_{1,-1,r/2}$ for some $r$ as in Theorem~\ref{vert}.} \label{f:diag}
\end{figure}

\begin{proof}
Note that $\alpha\, \K = \K \cup (\K+1)$ and 
\[
\alpha\, \Delta_{1,1,-r} = \Delta_{1,0,-r}, \quad \alpha\, \Delta_{0,1,r/2} = \Delta_{1,1,-r}, \quad \alpha\, \Delta_{1,-1,r/2} = \Delta_{0,1,r/2}.
\]
Moreover, we have 
\[
(\K+1) \cap \Delta_{1,0,r} = \emptyset = (\K-1) \cap \Delta_{1,0,r} = (\K+\alpha) \cap \Delta_{1,0,r}
\] 
since $-\frac{8}{15} < r <  \frac{2}{15}$ and
\[
\begin{split}
\min\{x \,:\, x+iy \in \K\} & = \sum_{k=1}^\infty \left( \frac{3}{(-4)^{2k-1}} + \frac{-1}{(-4)^{2k}} \right) = - \sum_{k=1}^\infty \frac{13}{16^k} = - \frac{13}{15}, \\
\max\{x \,:\, x+iy \in \K\} & = \sum_{k=1}^\infty \left( \frac{-1}{(-4)^{2k-1}} + \frac{3}{(-4)^{2k}} \right) = \sum_{k=1}^\infty \frac{7}{16^k} = \frac{7}{15}.
\end{split}
\]
Using these geometric properties, we obtain that
\[
\begin{split}
\alpha\, (\K \cap \Delta_{1,1,-r}) & = (\K \cup (\K + 1)) \cap \Delta_{1,0,r} = \K \cap \Delta_{1,0,r\,}, \\
\alpha^2\, (\K \cap \Delta_{0,1,r/2}) & = \big(\K \cup (\K+1) \cup (\K+\alpha) \cup (\K + \alpha + 1)\big) \cap \Delta_{1,0,r} \\
& = (\K \cap \Delta_{1,0,r}) \cup \big((\K + i) \cap \Delta_{1,0,r}) = (\K \cap \Delta_{1,0,r}) + \{0,i\}, \\
\alpha\, (\K \cap \Delta_{1,-1,r/2}) & = \big(\K \cup (\K + 1)\big) \cap \Delta_{0,1,r/2} =  (\K \cap \Delta_{0,1,r/2}) + \{0,1\}, \\
\alpha^3\, (\K \cap \Delta_{1,-1,r/2}) & = \alpha^2\, (\K \cap \Delta_{0,1,r/2}) - \{0,2i\} = (\K \cap \Delta_{1,0,r}) + \{-2i,-i,0,i\}.
\end{split}
\]

For $r$ as in Theorem~\ref{vert}, we have $-\frac{8}{15} < r <  \frac{2}{15}$ since 
\[
\begin{split}
\min\left\{\sum_{k=1}^\infty \frac{2 a_k}{(-4)^k} \,:\, a_1a_2\cdots \in \{0,1\}^\omega\right\} & = - \sum_{k=1}^\infty \frac{8}{16^k} = - \frac{8}{15}, \\
\max\left\{\sum_{k=1}^\infty \frac{2 a_k}{(-4)^k} \,:\, a_1a_2\cdots \in \{0,1\}^\omega\right\} & = \sum_{k=1}^\infty \frac{2}{16^k} = \frac{2}{15},
\end{split}
\]
and the minimum and maximum are attained only for the sequences $(10)^\omega$ and $(01)^\omega$, which we have excluded. 
Therefore, Theorem~\ref{vert} and the formulae above give that
\[
\begin{split}
\K \cap \Delta_{1,1,-r} & = - \tfrac{1+i}{2} \left(r\,(1+i) + \left[-\tfrac{2}{5}, \tfrac{3}{5}\right] i\right) = -r\,i + \left[-\tfrac{1}{5}, \tfrac{3}{10}\right] (1-i), \\
\K \cap \Delta_{0,1,r/2} & = \tfrac{i}{2} \left(r\,(1+i) + \left[-\tfrac{2}{5}, \tfrac{8}{5}\right]\,i\right) = r\,\tfrac{-1+i}{2} + \left[-\tfrac{4}{5}, \tfrac{1}{5}\right] , \\
\K \cap \Delta_{1,-1,r/2} & = \tfrac{1-i}{4} \left(r\, (1+i) + \left[-\tfrac{12}{5}, \tfrac{8}{5}\right] i\right) = \tfrac{r}{2} + \left[-\tfrac{3}{5}, \tfrac{2}{5}\right] (1+i),
\end{split}
\]
which proves the statements for the intersection of $\K$ with lines. 
For the intersections of $\partial \K$ with lines, it only remains to check that the points in 
\[
\alpha^{-2} \big((\K \cap \Delta_{1,0,r}) \cap ((\K \cap \Delta_{1,0,r}) + i)\big) = \left\{\tfrac{1}{\alpha^2} \left(r\,(1+i) + \tfrac{3i}{5}\right)\right\}
\]
and
\[
\alpha^{-1} \big((\K \cap \Delta_{0,1,r/2}) \cap ((\K \cap \Delta_{0,1,r/2}) + 1)\big) = \left\{\tfrac{1}{\alpha^3} \left(r\,(1+i) - \tfrac{2}{5}\,i\right)\right\}
\]
are not in~$\partial K$.  
By the proof of Theorem~\ref{vert}, the digit expansion 
\[
[.a_1100a_2011a_3100a_4011\cdots]_\alpha = r\,(1+i) + \tfrac{3}{5}\,i
\]
is given by a path starting only from~$g_3$ in the boundary automaton~$\mathcal{G}$.
Dividing by $\alpha^2$ adds $00$ in front of the expansion, but $g_3$ cannot be reached by $00$, hence $\frac{1}{\alpha^2} \left(r\,(1+i) + \frac{3i}{5}\right)$ is not on the boundary of~$K$. 
Similarly, the digit expansion 
\[
[.a_1011a_2100a_3011a_4100\cdots]_\alpha = r\,(1+i) - \tfrac{2}{5}\,i
\]
is given by a path starting from~$g_4$ in the boundary automaton~$\mathcal{G}$, and $g_4$ cannot be reached by $000$, thus $\frac{1}{\alpha^3} \left(r\,(1+i) - \frac{2}{5}\,i\right)$ is not on the boundary of~$K$. 
This proves that all intersections of $\K$ with the given lines are line segments.
\end{proof}

We can use this method to find a vertical line with a more interesting intersection. 
For example, if we look at $\Delta_{1,0,-1/4}$, we see that the only expansion $\sum_{k=1}^\infty \frac{b_k}{(-4)^k}$ with $b_k \in \mathcal{D}$ having real part $-1/4$ is $b_1b_2\cdots = 100\cdots$. 
In base $\alpha$, we must therefore have $d_1d_2d_3d_4 \in \{0001, 0101, 1010, 1110\}$, which correspond to the digits $1, 1-2i, 1+3i, 1+i \in \mathcal{D}$.
The remaining digit sequences $d_5 d_6 \cdots$ give points in $\frac{1}{\alpha^4} (\K \cap \Delta_{1,0,0})$, thus 
\[
\K \cap \Delta_{1,0,-1/4} = - \tfrac{1}{4} + \left(\left[-\tfrac{9}{10}, -\tfrac{13}{20}\right] \cup \left[-\tfrac{2}{5}, \tfrac{1}{10}\right] \cup \left[\tfrac{7}{20}, \tfrac{3}{5}\right]\right)\, i.
\]

We go on with $\Delta_{1,0,-1/4+1/16}$ and see that points in the intersection have imaginary part with an expansion in base $-4$ starting with two digits in $\{-2,0,1,3\}$ and ending with digits in $\{-1,0,1,2\}$. 
For the limit $\Delta_{1,0,-1/5}$ of lines of this form, we obtain the following intersection with~$\K$, see Figure~\ref{f:vert}.

\begin{thm}
We have
\[
\K \cap \Delta_{1,0,-1/5} = \left\{-\frac{1}{5} + \sum_{k=1}^\infty \frac{d_k}{(-4)^k}\, i \,:\, d_k \in \{-2,0,1,3\} \ \mbox{for all}\ k \ge 1\right\},
\]
and a point is in $\partial \K \cap \Delta_{1,0,-1/5}$ if and only if it is of the form $-\tfrac{1}{5} + \sum_{k=1}^\infty d_k (-4)^{-k}\, i$, where $d_1 d_2 \cdots$ is a path in the automaton in Figure~\ref{f:5}.
\end{thm}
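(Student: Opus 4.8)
The plan is to treat the two assertions separately: first pin down $\K\cap\Delta_{1,0,-1/5}$ by a digit argument, and then extract the boundary by running the relevant digit blocks through the boundary automaton $\mathcal{G}'$ of Figure~\ref{fig:G}.

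For the first equality I would apply Lemma~\ref{l:1} with $(p,q,r)=(1,0,-\tfrac15)$: a point $z=\sum_{k\ge1} b_k(-4)^{-k}$ with $b_k\in\D$ lies on the line iff $\sum_{k\ge1}\mathfrak{R}(b_k)(-4)^{-k}=-\tfrac15$. Since $-\tfrac15=\sum_{k\ge1}(-4)^{-k}$ and $\mathfrak{R}(b_k)\in\{-1,0,1,2,3\}$, setting $e_k=\mathfrak{R}(b_k)-1\in\{-2,-1,0,1,2\}$ reduces this to $\sum_{k\ge1}e_k(-4)^{-k}=0$. I would then reuse the integrality/tail estimate from the proof of Theorem~\ref{vert}: the tail $\sum_{k>n}e_k(-4)^{-k}$ lies in $4^{-n}\Z$ and is bounded in absolute value by $\tfrac{2}{3}\,4^{-n}<4^{-n}$, hence vanishes for every $n$, forcing $e_k=0$, i.e.\ $\mathfrak{R}(b_k)=1$ for all $k$. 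The digits of $\D$ with real part $1$ are exactly $1-2i,1,1+i,1+3i$, whose imaginary parts $-2,0,1,3$ may be chosen freely and independently; writing $d_k=\mathfrak{I}(b_k)$ gives $z=-\tfrac15+i\sum_{k\ge1}d_k(-4)^{-k}$ with $d_k\in\{-2,0,1,3\}$, which is the claimed description. Note that $\{-2,0,1,3\}$ is a complete residue system modulo $4$, so the imaginary-part expansion is unique off a countable set, which is why the boundary statement can be phrased via the existence of a path.

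For the boundary I would use that $z\in\partial\K$ iff some base-$\alpha$ expansion of $z$ labels an infinite path in $\mathcal{G}'$, together with the fact established above that every base-$\alpha$ expansion of a point of $\K\cap\Delta_{1,0,-1/5}$ groups into blocks of real part $1$. Each admissible imaginary digit corresponds to a unique length-$4$ block in base $\alpha$, namely $0\leftrightarrow 0001$, $1\leftrightarrow 1110$, $-2\leftrightarrow 0101$, $3\leftrightarrow 1010$, read off from the table of $[\,\cdot\,]_\alpha$-values. The task then becomes: which sequences over $\{-2,0,1,3\}$ label an infinite path in the base-$(-4)$ automaton $\mathcal{G}$ obtained from $\mathcal{G}'$ by collapsing each length-$4$ path into a single edge, restricted to these four blocks? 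I would compute these length-$4$ transitions by a subset construction on the states $g_1,\dots,g_6$.

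The computation, which I expect to be the main obstacle because $\mathcal{G}'$ is nondeterministic, should show that the blocks $0001$ and $1110$ admit only the deterministic transitions $g_3\xrightarrow{0}g_4$ and $g_4\xrightarrow{1}g_3$, while $0101$ (resp.\ $1010$) produces a transition only out of $g_3$ (resp.\ $g_4$), reaching the set $\{g_1,g_2,g_3,g_4,g_5\}$ (resp.\ $\{g_2,g_3,g_4,g_5,g_6\}$). The decisive point is that none of the four blocks has an outgoing edge from $g_1,g_2,g_5,g_6$, so no infinite path can ever visit these states; every infinite path is therefore confined to $\{g_3,g_4\}$, and by exhibiting explicit length-$4$ walks one checks that the surviving edges within $\{g_3,g_4\}$ are the self-loops $g_3\xrightarrow{-2}g_3$, $g_4\xrightarrow{3}g_4$ together with $g_3\xrightarrow{0}g_4$, $g_3\xrightarrow{-2}g_4$, $g_4\xrightarrow{1}g_3$ and $g_4\xrightarrow{3}g_3$. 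This two-state automaton (all states initial and terminal) is the automaton of Figure~\ref{f:5}, and since a point of $\K\cap\Delta_{1,0,-1/5}$ lies on $\partial\K$ exactly when one of its block expansions $d_1d_2\cdots$ labels an infinite path there, the second assertion follows.
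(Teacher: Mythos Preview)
Your proposal is correct and follows essentially the same route as the paper's proof: the digit argument forcing $\mathfrak{R}(b_k)=1$ via the tail estimate is exactly the ``same way as in the proof of Theorem~\ref{vert}'' invoked there, and your determination of the length-$4$ transitions on $0001,0101,1010,1110$ in~$\mathcal{G}'$ (only $g_3,g_4$ have outgoing edges, with precisely the six listed edges between them) reproduces the paper's list verbatim. Your additional observations---that $\{-2,0,1,3\}$ is a complete residue system and that $0101,1010$ also reach states outside $\{g_3,g_4\}$ which are then dead ends---are correct but not needed for the statement.
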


\begin{proof}
Since $-\frac{1}{5} = \sum_{k=1}^\infty (-4)^{-k}$, we obtain in the same way as in the proof of Theorem~\ref{vert} that $\mathfrak{R}\big(\sum_{k=1}^\infty b_k (-4)^{-k}\big) = -\frac{1}{5}$ with $b_k \in \mathcal{D}$ if and only if $\mathfrak{R}(b_k) = 1$ for all $k \ge 1$, i.e., $b_k \in \{1{-}2i, 1, 1{+}i, 1{+}3i\}$. 
The corresponding 4-digit blocks in base~$\alpha$ are $0101$, $0001$, $1110$, and $1010$. 
This proves the characterization of $\K \cap \Delta_{1,0,-1/5}$.

In the boundary automaton, the digit blocks $0101$, $0001$, $1110$, and $1010$ are accepted only from $g_3$ and $g_4$, and we have the transitions 
\[
g_3 \overset{0101}{\longrightarrow} g_3,\ g_3 \overset{0001}{\longrightarrow} g_4,\ g_3 \overset{0101}{\longrightarrow} g_4,\ g_4 \overset{1010}{\longrightarrow} g_4,\ g_4 \overset{1010}{\longrightarrow} g_3,\ g_4 \overset{1110}{\longrightarrow} g_3.
\]
Taking imaginary parts of the corresponding numbers in $\mathcal{D}$ gives the automaton in Figure~\ref{f:5}.
\end{proof}

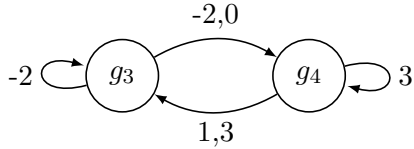
\begin{figure}[ht]
\centerline{\begin{tikzpicture}[->,>= latex,node distance=2.3 cm,semithick]
\node [ state ] (g3) {$g_3$};
\node [ state ] (g4) [ right = 1.5 cm of g3] {$g_4$};
\path (g3) edge [loop left] node{-2} ()
           (g3) edge[bend left,above] node {-2,0} (g4)
           (g4) edge[loop right] node {3} ()
           (g4) edge[bend left,below] node {1,3} (g3);
\end{tikzpicture}}
\caption{Automaton recognizing the imaginary parts of points in $\partial \K \cap \Delta_{1,0,-1/5}$ in base $-4$.} \label{f:5}
\end{figure}

\begin{thm}
The Hausdorff dimension of $\K \cap \Delta_{1,0,-1/5}$ is 1 and 
\[
\dim_H(\partial \K \cap \Delta_{1,0,-1/5}) = \tfrac{\log 3}{\log 4} \approx 0.7925 > \mathfrak{s}-1.
\]
\end{thm}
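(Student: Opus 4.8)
The plan is to prove the two assertions separately, reading off all the needed combinatorial data from the preceding theorem and then feeding it into the Mauldin--Williams framework already set up in the proof of Theorem~\ref{t:Hd}.

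For the claim $\dim_H(\K \cap \Delta_{1,0,-1/5}) = 1$, I would start from the description
\[
\K \cap \Delta_{1,0,-1/5} = \left\{-\tfrac{1}{5} + \sum_{k=1}^\infty \tfrac{d_k}{(-4)^k}\, i \,:\, d_k \in \{-2,0,1,3\}\right\}
\]
given by the previous theorem. The key observation is that the digit set $\{-2,0,1,3\}$ is a \emph{complete residue system modulo $4$}, since $\{-2,0,1,3\} \equiv \{0,1,2,3\} \pmod 4$. Thus the maps $x \mapsto (x+d)/(-4)$, $d \in \{-2,0,1,3\}$, form an IFS of four contractions of ratio $\tfrac14$ whose images tile their common attractor with overlaps of measure zero; by the standard theory of radix expansions this forces every real number in the relevant range to possess such an expansion, so the set of attainable imaginary parts is a genuine interval rather than a Cantor-type set. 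Choosing the extremal digits in odd and even positions one even computes this interval to be $\left[-\tfrac{14}{15}, \tfrac{11}{15}\right]$, of length $\tfrac53$, and $4 \cdot \tfrac14 = 1$ confirms that the four images exactly fill it. Hence the intersection is a line segment and its Hausdorff dimension is~$1$.

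For the boundary I would invoke the same GIFS machinery as in Theorem~\ref{t:Hd}. The preceding theorem identifies $\partial \K \cap \Delta_{1,0,-1/5}$ with the set coded by the infinite paths of the automaton in Figure~\ref{f:5}, which is strongly connected (one can traverse $g_3 \to g_4 \to g_3$) and has incidence matrix
\[
M = \begin{pmatrix} 1 & 2 \\ 2 & 1 \end{pmatrix}
\]
recording one self-loop and two edges to the other node at each of the two states. Its characteristic polynomial is $(1-x)^2 - 4 = (x-3)(x+1)$, so the spectral radius is $\beta = 3$. Applying Mauldin and Williams~\cite{MW88} exactly as in the proof of Theorem~\ref{t:Hd}---with all contraction ratios equal to $\tfrac14$ and the nonoverlapping compact sets supplied by Remark~\ref{rem:MW}---then gives
\[
\dim_H(\partial \K \cap \Delta_{1,0,-1/5}) = \frac{\log \beta}{\log 4} = \frac{\log 3}{\log 4}.
\]

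It finally remains to check the strict inequality $\frac{\log 3}{\log 4} > \mathfrak{s}-1$. Writing $\log\sqrt2 = \tfrac12\log 2$ turns $\mathfrak{s}-1$ into $\log_2(\lambda^2/2)$ and $\frac{\log 3}{\log 4}$ into $\log_2\sqrt3$, so the inequality is equivalent to $\lambda^2 < 2\sqrt3$, i.e.\ $\lambda^4 < 12$. Using the defining relation $\lambda^3 = \lambda^2 + 2$ one obtains $\lambda^4 = \lambda^2 + 2\lambda + 2$, and the crude bound $\lambda < 1.7$ (valid since $1.7^3 = 4.913 > 4.89 = 1.7^2 + 2$ and $t \mapsto t^3 - t^2 - 2$ is increasing past the root) makes $\lambda^2 + 2\lambda + 2 < 8.29 < 12$, which settles it. I expect the only genuinely non-mechanical step to be the interval recognition in the first part: once one sees that the complete-residue-system property forces the full intersection to be a segment rather than a fractal of positive codimension, the boundary dimension is a direct eigenvalue computation and the comparison with $\mathfrak{s}-1$ is a short estimate on the algebraic number~$\lambda$.
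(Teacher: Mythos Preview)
Your overall strategy matches the paper's: use the complete-residue-system property for the full intersection and the Mauldin--Williams eigenvalue computation for the boundary. The boundary part and the final inequality are fine and essentially identical to (or more detailed than) what the paper does.

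There is, however, a genuine error in your treatment of $\K\cap\Delta_{1,0,-1/5}$. You assert that because $\{-2,0,1,3\}$ is a complete residue system modulo~$4$, the attractor of the IFS $x\mapsto(x+d)/(-4)$ is the full interval $[-\tfrac{14}{15},\tfrac{11}{15}]$. This is false. A complete residue system guarantees that the attractor tiles~$\R$ by integer translates and hence has Lebesgue measure~$1$; it does \emph{not} guarantee connectedness, and here the convex hull you compute has length $\tfrac{5}{3}>1$, so the attractor cannot be all of it. Concretely, if $T$ were that interval then $-4T$ would have to equal $(T-2)\cup T\cup(T+1)\cup(T+3)$, but with $|T|=\tfrac53<2$ the translates $T+1$ and $T+3$ are disjoint (and likewise $T-2$ and $T$), so the right-hand side is not an interval. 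Your sentence ``$4\cdot\tfrac14=1$ confirms that the four images exactly fill it'' is a non sequitur: that identity holds for \emph{any} four similarities of ratio $\tfrac14$ and says nothing about whether the images cover the convex hull.

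The fix is exactly what the paper does: stop at ``complete residue system $\Rightarrow$ the digit tile has nonempty interior $\Rightarrow$ Hausdorff dimension~$1$'' and drop the (incorrect) claim that the intersection is a single line segment.
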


\begin{proof}
We can interpret the intersection with $\Delta_{1,0,-1/5}$ as the self-similar digit tile in $\R$ with $A=-4$ and $D=\{-2,0,1,3\}$. 
Since $D$ is a complete residue system modulo 4, this tile has non empty interior and therefore is of dimension~1. 

For the boundary, we have $\partial \K \cap \Delta_{1,0,-1/5} = K_3 \cup K_4$, with
\[
-4 K_3 = (K_3 - 2) \cup (K_4 - 2) \cup K_4, \quad -4 K_4 = (K_3 +1) \cup (K_3 + 3) \cup (K_4 + 3).
\]
Therefore, by \cite{MW88}, the Hausdorff dimension of $\partial \K \cap \Delta_{1,0,-1/5}$ is $\log \beta/\log 4$, where $\beta$ is the Perron-Frobenius eigenvalue of the matrix $\binom{1\ 2}{2\ 1}$, i.e., $\beta = 3$. 
\end{proof}
 
\subsection*{Acknowledgments.} The authors were supported  by the project I3346 of the Japan Society for the Promotion of Science (JSPS) and the FWF, the project FR 07/2019 of the Austrian Agency for International Cooperation in Education and Research (OeAD), the project PHC Amadeus 42314NC, and the project ANR-18-CE40-0007 CODYS of the Agence Nationale de la Recherche (ANR). 
\end{section}

\bibliographystyle{siam} 
\bibliography{twindragon}

\begin{thebibliography}{1}

\bibitem{AS05}
{\sc S.~Akiyama and K.~Scheicher}, {\em Intersecting two-dimensional fractals
  with lines}, Acta Sci. Math. (Szeged), 71 (2005), pp.~555--580.

\bibitem{AkiyamaThuswaldner05}
{\sc S.~Akiyama and J.~M. Thuswaldner}, {\em The topological structure of
  fractal tilings generated by quadratic number systems}, Comput. Math. Appl.,
  49 (2005), pp.~1439--1485.

\bibitem{GKP98}
{\sc P.~J. Grabner, P.~Kirschenhofer, and H.~Prodinger}, {\em The sum-of-digits
  function for complex bases}, J. London Math. Soc. (2), 57 (1998), pp.~20--40.

\bibitem{MS13}
{\sc A.~Manning and K.~Simon}, {\em Dimension of slices through the
  {S}ierpinski carpet}, Trans. Amer. Math. Soc., 365 (2013), pp.~213--250.

\bibitem{M54}
{\sc J.~M. Marstrand}, {\em Some fundamental geometrical properties of plane
  sets of fractional dimensions}, Proc. London Math. Soc. (3), 4 (1954),
  pp.~257--302.

\bibitem{MW88}
{\sc R.~D. Mauldin and S.~C. Williams}, {\em Hausdorff dimension in graph
  directed constructions}, Trans. Amer. Math. Soc., 309 (1988), pp.~811--829.

\bibitem{ST03}
{\sc K.~Scheicher and J.~M. Thuswaldner}, {\em Neighbours of self-affine tiles
  in lattice tilings}, in Fractals in {G}raz 2001, Trends Math.,
  Birkh\"{a}user, Basel, 2003, pp.~241--262.

\end{thebibliography}
\end{document}